\documentclass[onefignum,onetabnum]{siamart190516}
\usepackage{epsfig}


\title{A note on indefinite matrix splitting and preconditioning}
\author{Andy Wathen\thanks{Mathematical Institute, Oxford University, UK (wathen@maths.ox.ac.uk)
and Computational Mathematics Group, Rutherford Appleton Laboratory, UK (andrew.wathen@stfc.ac.uk)}}
\ifpdf
\hypersetup{
  pdftitle={Indefinite matrix splitting and preconditioning},
  pdfauthor={A. J. Wathen}
}
\fi


\externaldocument{ex_supplement}

\usepackage{graphicx}
\usepackage{amsmath}
\usepackage{amsfonts}
\usepackage{MnSymbol}


%
\newtheorem*{thm*}{Theorem}[section]
\newtheorem*{lem*}[thm*]{Lemma}
\newtheorem*{prop*}[thm*]{Proposition}

\newtheorem*{exmpl*}[thm*]{{\it Example}}
\newtheorem*{rem*}[thm*]{{\it Remark}}
\newtheorem*{cor*}[thm*]{Corollary}

\def\smallskip{\vskip 5pt plus1pt minus1pt}
\def\medskip{\vskip 9pt plus2pt minus2pt}
\def\bigskip{\vskip 18pt plus4pt minus4pt}

\def\half{{\frac{1}{2}}}

\def\R{\hbox{{\msbm \char "52}}}

\def\R{{\mathbb{R}}}

\def\sqr#1#2{{\vcenter{\vbox{\hrule height.#2pt
        \hbox{\vrule width.#2pt height#1pt \kern#1pt \vrule width.#2pt}
             \hrule height.#2pt}}}}

\def\minres{{\large {\sc minres}}}
\def\gmres{{\large {\sc gmres}}}


\begin{document}

\maketitle
\begin{center}{Dedicated to Daniel Szyld on his 70th birthday}\end{center}
\medskip

\begin{abstract}
  The solution of systems of linear(ized) equations lies at the heart of
  many problems in Scientific Computing. In particular for systems of large
  dimension, iterative methods are a primary approach.
  Stationary iterative methods are generally based on a matrix splitting, whereas
  for polynomial iterative methods such as Krylov subspace iteration, the splitting
  matrix is the preconditioner. The smoother in a multigrid method is generally a
  stationary or polynomial iteration.

  Here we consider real symmetric indefinite and complex Hermitian indefinite coefficient
  matrices and prove that no splitting matrix can lead to a contractive stationary
  iteration unless the inertia is exactly preserved. This has consequences for
  preconditioning for indefinite systems and smoothing for multigrid as we further describe.
  
\end{abstract}

\begin{keywords}
  Iterative methods for linear systems, indefinite matrices, matrix splitting, preconditioning,
  smoothing
\end{keywords}

\begin{AMS}
  65F08, 65F10, 65M55
\end{AMS}

\section*{Introduction and Main Result}

An elementary approach for the iterative solution of an invertible linear system of equations
$$A x = b$$
is to split $$A = M-N$$ and from some starting vector $x_0$ to iterate
\begin{equation}{\rm{solve}}\quad M x_{k+1} = N x_{k} + b, \quad k= 0,1,\ldots\  .\label{statit}
  \end{equation}
$M$ is here called the splitting matrix or the preconditioner and it is required to be invertible.
For any $x_0$ the sequence $\{x_k\}$ converges to the solution if and only if all eigenvalues
of $M^{-1} N = I-M^{-1} A$ lie strictly inside the unit disc; in such a situation the
iteration is contractive. Equivalently, all eigenvalues of $M^{-1} A$ must lie in $B(1,1)$, the
open unit ball centred at $1$. In particular, if ever $M^{-1}A$ has an eigenvalue with negative
real part then (\ref{statit}) certainly can not be contractive. For the situation when $A$ is
singular but the equations are consistent, see \cite{Szyld}. 

More recent use of preconditioning has utilised polynomial iterative methods, either with
fixed polynomial sequences as in Chebyshev (semi-)iteration or implicitly defined polynomials
as in Krylov subspace iteration (for both, see for example \cite{Saad},\cite{Simoncini_Szyld}).
\color{black}

Krylov subspace methods are typically applied to the preconditioned system $M^{-1} A x = M^{-1} b$
so that it is polynomials in $M^{-1} A$ that are relevant in the context of convergence; the
polynomials $p(\cdot)$ \color{black}must satisfy the consistency
condition $p(0)=1$.
\color{black}
\color{black}
Convergence of a Krylov subspace iteration
will be rapid if $\max{|p(\lambda)|}$ reduces significantly as the degree of the polynomials
$p$ is increased. Here the maximum is over all eigenvalues $\lambda$ of $M^{-1}A$.
Rapid convergence is typically associated with clustering of these eigenvalues away from
the 
\color{black}
origin \cite[Chapter 3]{Greenbaum}.
If all eigenvalues are real and lie only to one side of the origin, then
a Conjugate Gradient Krylov subspace method can be utilised.
If all eigenvalues of $M^{-1}A$ are
real but lie on both sides of the origin, then
a less efficient minimum residual method such as \minres\  is typically used and convergence can be 
less rapid (see for example \cite{ESW}).
Whenever there are non-real eigenvalues
then the more general (and more expensive) \gmres\ iterative
method \cite{Saad_Schultz86} is ubiquitously
employed \color{black} as a Krylov subspace iterative solver.

In terms of methods with fixed polynomial sequences, usually it is polynomials in $I-M^{-1}A$ that
are employed so that the consisitency condition is $p(1)=1$. The classical Chebyshev semi-iterative
method based on the Chebyshev polynomials on a single interval (see for example \cite[Chapter 5]{Varga})
can be used when all eigenvalues are real and lie to one side of unity. It is more difficult to identify
polynomials which lead to an efficient iterative method when all eigenvalues are real, but lie to both
sides of $1$. For the general situation with complex eigenvalues, see for example
\cite{Eiermann_Niethammer_Varga}.

It is thus possible to have a convergent polynomial iterative method for all eigenvalue distributions.
Nevertheless, when eigenvalues are all real, convergence is usually slower when $M^{-1} A$ has eigenvalues
with different signs rather than just of one sign.

\color{black}

If $A$ is real and symmetric, it's {\em{inertia}} (the triple of number of positive,
zero and negative eigenvalues) is $(p,z,n)$ for some integers $p,z,n\geq 0$.
$z=0$ for any invertible matrix. A matrix with inertia $(p,0,0)$ is {\em{positive definite}}
and a matrix with inertia $(0,0,n)$ is {\em{negative definite}}; in either case the matrix is
said to be {\em{definite}}.

\begin{lem*}\nonumber
If matrices $A$ and $M$ with the same dimension are real symmetric and
  invertible with different inertia, then
  $M^{-1}A$ has at least one negative real eigenvalue and thus the
  stationary iteration (\ref{statit}) can not be contractive.
\end{lem*}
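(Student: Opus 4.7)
The plan is to track the inertia of the real symmetric matrix pencil $A-\lambda M$ as the real parameter $\lambda$ varies. For each $\lambda\in\R$ at which $A-\lambda M$ is invertible, define $f(\lambda)$ to be its number of negative eigenvalues. Since the eigenvalues of a real symmetric matrix depend continuously on its entries and $f$ takes integer values, $f$ is constant on each connected component of the set $\{\lambda\in\R:\det(A-\lambda M)\neq 0\}$; its value can change only as $\lambda$ passes through a real root of $\det(A-\lambda M)$. Such a root is by definition a real generalized eigenvalue of the pencil $(A,M)$, equivalently a real eigenvalue of $M^{-1}A$.

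Writing the inertias of $A$ and $M$ as $(p_A,0,n_A)$ and $(p_M,0,n_M)$ respectively, the hypothesis becomes $n_A\neq n_M$. I would then evaluate $f$ at the two ends of the interval $(-\infty,0]$. At $\lambda=0$ we have immediately $f(0)=n_A$. For $\lambda$ very negative, set $\mu=-\lambda>0$ and write
\[
A-\lambda M \;=\; \mu\bigl(M+\tfrac{1}{\mu}A\bigr).
\]
Because $M$ is invertible, its eigenvalues are bounded away from zero, so for $\mu$ sufficiently large the symmetric perturbation $M+A/\mu$ has the same inertia as $M$; multiplication by the positive scalar $\mu$ preserves inertia. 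Hence $f(\lambda)=n_M$ for all sufficiently negative $\lambda$.

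Since $f$ then takes differing values $n_A$ and $n_M$ at the two ends of $(-\infty,0]$, it must change value at least once on this half-line; each change must occur at a negative real eigenvalue of $M^{-1}A$, producing the eigenvalue claimed in the lemma. The conclusion that the stationary iteration (\ref{statit}) cannot be contractive then follows immediately from the observation made just before the lemma statement, namely that a negative real eigenvalue of $M^{-1}A$ lies outside the open disc $B(1,1)$.

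The step I expect to be most delicate is the limiting statement for $\lambda\to-\infty$: one needs the invertibility of $M$ to ensure that no eigenvalue of $M+A/\mu$ wanders to zero as $\mu$ grows, so that $f$ genuinely stabilises at $n_M$ rather than oscillating. Once this perturbative step is in place, the remainder of the proof is just a jump count for the integer-valued function $f$.
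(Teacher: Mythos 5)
Your proof is correct and is essentially the paper's argument in a different parameterization: the paper's homotopy $T(\theta)=(1-\theta)A+\theta M$, $\theta\in[0,1]$, becomes your pencil $A-\lambda M$, $\lambda\in(-\infty,0]$, under the change of variable $\lambda=-\theta/(1-\theta)$, and both track the inertia along the path from $A$ to (something with the inertia of) $M$, deduce a singular intermediate matrix, and translate its parameter value into a negative real eigenvalue of $M^{-1}A$. The only structural difference is that the paper's compact interval lands exactly on $M$ at $\theta=1$, whereas your half-line requires the perturbative limit argument you flag at the end to identify the inertia as $\lambda\to-\infty$.
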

\begin{proof}
  For any real parameter $\theta$, the real symmetric matrix
  \begin{equation}T(\theta) = (1-\theta) A + \theta M
\label{T}\end{equation}
    satisfies $T(0) = A, T(1)=M$
  and so it's eigenvalues are real and depend continuously on the entries,
  therefore
  continuously on $\theta$. Since the inertia of $A$ and $M$ are different, at least one
  eigenvalue of $T$ must change sign as $\theta$ varies between $0$ and $1$, hence there must
  exist $\widehat\theta\in (0,1)$ such that $T(\widehat\theta)$ is singular (by the Intermediate
  Value Theorem). Thus
  $$ A + \frac{\widehat\theta}{(1-\widehat\theta)} M$$ is singular, so
  $$\frac{\widehat\theta}{\widehat\theta - 1} <0$$ is an eigenvalue of $M^{-1} A$
    and the result follows.
\end{proof}
  
\begin{rem*} An identical result and proof holds also for Complex Hermitian matrices.
\end{rem*}

\color{black}
\begin{rem*} Having identical inertia is of course not sufficient
for contraction of (\ref{statit}) as trivial examples like $A= diag(1,-1),M=-A$ show.
\end{rem*}
\color{black}

\begin{rem*} This lemma certainly does not rule out the existence of complex eigenvalues for
  $M^{-1} A$, equivalently complex generalised
  eigenvalues of the `pencil' $A-\lambda M$ even in the case of real, symmetric matrices $A,M$.
  Indeed, unless
  at least one of $A,M$ is definite, it is generic that complex conjugate pairs of
  eigenvalues exist. When either $A$ or $M$ are definite, all eigenvalues of
  the pencil $A - \lambda M$ must be real.
\end{rem*}

There is much analysis of matrix splittings when $A$ is positive definite (see for example
the classic book by Richard Varga \cite{Varga}). In this case the above lemma indicates
that $M$ should also be positive definite; this is no surprise! 

However, positive definite preconditioning is commonly applied to indefinite symmetric matrices,
notably
in block diagonal preconditioning of saddlepoint systems
(\cite{BenziGolubLiesen},\cite{partII},\cite{Pestana_W}).
\color{black}
In this situation, the eigenvalues of $M^{-1} A$ must all be real because of the similarity transform
\color{black}
$$(M^{\half})^T (M^{-1} A) M^{-\half} =M^{-\half} A M^{-\half}$$
\color{black}
which is a real symmetric matrix (and so must have all real eigenvalues).
\color{black}
Sylvester's Law of Inertia (see for example \cite{GVL}) ensures in this case that
\color{black}
the number of positive, negative and zero eigenvalues of $M^{-1} A$ and of $A$ are the same
because $M^{-\half} A M^{-\half}$ is congruent to $A$ since $M$ and so $M^{\half}$ are symmetric.
\color{black}
(Note the unambiguous definition of
$M^{\half} = Q \Lambda^{\half} Q^T$ in terms of the diagonalisation $M=Q \Lambda Q$ when positive
square roots are taken).

For this reason, an iterative method for indefinite systems such as {\minres}\ is required
when definite preconditioning is applied to an indefinite system\color{black}.
\color{black}Convergence will depend
on how small $|p(\lambda)|$ can be for all positive and negative eigenvalues for polynomials,
$p$ that satisfy the consistency condition $p(0)=1$ as the degree of the polynomial is
increased
\color{black}(see \cite[Section 4.1]{ESW} or for a more comprehensive description \cite{Fischer}).
\color{black}
For example for block diagonal preconditioning of the Stokes saddle point system,
see the convergence analysis in \cite{FischerSilvesterWathen}.

At another extreme, {\em{constraint preconditioning}} of saddle point systems
ensures that the inertia of the preconditioner
$$M=\left[\begin{array}{cc} W & B^T \\B & 0\end{array}\right] $$
and the original saddle point matrix 
\begin{equation}A=\left[\begin{array}{cc} H & B^T \\B & 0\end{array}\right]
\label{saddlept}\end{equation}
are identical at least when $W,H$ are positive definite. Indeed, in the paper
\cite{KellerGouldWathen} that introduced constraint preconditioning,
it is proved that all eigenvalues
of the pencil $A-\lambda M$ are real and positive when $W,H$ are positive definite.
Clearly this is consistent with the Lemma above.
\color{black}A consequence is that a specialised and rapidly converging Conjugate Gradient
method can be employed
with constraint preconditioning \cite{Gould_Hribar_Nocedal}. \color{black}
For inexact constraint preconditioning, there are important eigenvalue bounds in
\cite{Bergamaschietal}, \cite{Bergamaschierratum}.
We comment that in the multigrid literature for flow problems, relevant
references are \cite{Braess_Sarazin}, \cite{Zulehner}.

In the common situation where $H\in\R^{m\times m}$ is invertible
in (\ref{saddlept}), the congruence transform of the saddle point matrix
\begin{equation}\nonumber
A=\left[\begin{array}{cc} H & B^T\\ B &0 \end{array}\right]
=
\left[\begin{array}{cc} I & 0\\ B H^{-1} & 0\end{array}\right]
\left[\begin{array}{cc} H & 0\\ 0 & -B H^{-1} B^T \end{array}\right]
\left[\begin{array}{cc} I & H^{-1} B^T\\ 0 & I \end{array}\right]
\end{equation}
is well known. A consequence is that if $H$ is positive definite and
$B\in\R^{n\times m}$ is of full rank, then
the inertia of $A$ must be $(m,0,n)$ by Sylvester's Law of Inertia. For such problems
preservation of this known inertia in a preconditioner is possibly simpler. 

This observation is implicitly employed in Vanka iteration for the indefinite saddle point system
of Stokes flow \cite{Vanka} where an indefinite splitting matrix is used. It may not
be immediately clear whether it shares the same inertia as the original Stokes matrix, however
by careful construction, associating local problems to small numbers of pressure
variables (Lagrange multipliers) often in a `patch', the number of negative eigenvalues
can be guaranteed to equal the number of Lagrange multipliers, $m$, and hence the inertia can be 
preserved \cite{PCPATCH},\cite{FarrellHeMachLachlan}.

Unless the inertia is identical, the above lemma implies that
\color{black}the simple iteration (\ref{statit}) \color{black} could not be contractive.
Further, even standard Chebyshev polynomial acceleration (based on eigenvalues in one interval
to the right of the origin) can not be convergent for any chosen interval if
\color{black} $M^{-1} A$ has \color{black}
negative \color{black} as well as positive \color{black} eigenvalues.
\color{black} This follows because Chebyshev polynomials grow exponentially in magnitude away from
the chosen interval; in turn this implies that $|p(\lambda)|$ must be large (certainly greater than $1$)
for any negative eigenvalue $\lambda$. \color{black}
The most successful Vanka-like iterations must not only satisfy the criterion in the lemma above,
they should cluster eigenvalues in an appropriate way.
%
%
There are now many examples in the
literature where Vanka-like iteration is used
for smoothing in a multigrid context---for some recent examples see
\cite{Anselmann_Bause}, \cite{Tao_Sifakis}, \cite{He}---even though there remains
limited theoretical literature on the convergence of Vanka-type iterations. Twenty
years ago, Manservisi \cite{Manservisi} declared that there was ``nothing at all on
Vanka-type smoothers'' and Larin and Reusken \cite{Larin_Reusken} say that ``As far as we
know there is no convergence analysis of a multigrid
method with a Vanka smoother applied...to Stokes problems''. Even in 2022, Saberi et al.
\cite{Saberietal}
can only refer to the paper of Manservisi for a proof of convergence of ``some Vanka-type
smoothers for the Stokes...equations'', though Larin and Reusken noted that ``smoothing
properties are not, however, considered in that paper''.
Smoothing does not strictly require contraction, but non-contractive smoothers are not so common.
\color{black}O\color{black}ur theory here only relates to the symmetric Stokes flow situation with block
Jacobi-like smoothing (additive Vanka relaxation)\color{black}, that is a situation where
a symmetric preconditioner is used
for a symmetric matrix problem. Nevertheless, if the iteration (\ref{statit}) is not contractive for
the symmetric Stokes problem, it would seem risky to employ \color{black} a related \color{black}
iteration for more complicated Navier-Stokes problems \color{black}since these problems give rise to
matrices that have additional nonsymmetric terms (see \cite[Chapters 8 \& 9]{ESW}).\color{black}

Indefinite systems necessarily arise for wave problems like the Helmholtz and Maxwell equations.
For the Helmholtz problem for example, inertia will depend on the wave frequency and 
discretisation. Finding a preconditioner with identical inertia in such a situation can be
expected to be challenging (see \cite{Ernst_Gander}).

\section*{Avoidance of crossing}
\label{sec:avoidance}

Because $T(\theta)$ in (\ref{T}) is symmetric for all $\theta$, the concept of {\em {avoidance
of crossing}} or {\em{eigenvalue avoidance}} due to Lax \cite{LaxLA} applies. Though this
may be of less importance than the Lemma above, it allows some slightly more precise 
statements the be made about the number of negative eigenvalues of $M^{-1} A$.

\begin{prop*} If $A$ has inertia $(p,0,n)$ and $M$ has inertia $(p+r,0,n-r)$ where the integer
  $r$ necessarily satisfies $-p\leq r\leq n$, then $M^{-1} A$ has $|r|+2 s$ real
  and negative eigenvalues
  for some $s\in\{0,1,2,\ldots, {\lfloor{\frac{p+n-r}{2}}\rfloor}\}$.
\end{prop*}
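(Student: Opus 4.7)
The plan is to reuse the symmetric homotopy $T(\theta) = (1-\theta) A + \theta M$ from the preceding Lemma, but now to count the zero crossings of its eigenvalues quantitatively rather than merely to show one exists. From the factorisation $T(\theta) = M\bigl(\theta I + (1-\theta) M^{-1} A\bigr)$ one obtains
\[
\det T(\theta) = \det(M)\,(\theta-1)^d\,\det\bigl(\mu I - M^{-1} A\bigr), \qquad \mu = \frac{\theta}{\theta-1}, \quad d = p+n.
\]
As $\theta$ increases through $(0,1)$, $\mu$ decreases monotonically through $(-\infty,0)$, and this smooth bijection identifies the zeros of $\det T(\theta)$ in $(0,1)$ with the negative real eigenvalues of $M^{-1}A$, with matching algebraic multiplicities. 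The proposition thus reduces to counting the zeros of $\det T(\theta)$ in $(0,1)$.

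Next I would track the $d$ eigenvalue curves $\lambda_i(\theta)$ of $T(\theta)$. Since $T$ is affine in $\theta$ and symmetric, by Rellich's theorem the $\lambda_i$ can be chosen real-analytic on $[0,1]$, and because $T(0)=A$ and $T(1)=M$ are both nonsingular each curve begins and ends away from $0$. Classifying each curve by the signs of $\lambda_i(0)$ and $\lambda_i(1)$, and letting $n_{-+}, n_{+-}, n_{--}, n_{++}$ denote the four populations, the inertia data $n_{--}+n_{-+} = n$ and $n_{--}+n_{+-} = n-r$ give $n_{-+} - n_{+-} = r$ and hence $n_{-+}+n_{+-} = |r| + 2\min(n_{-+},n_{+-})$. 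A real-analytic function with opposite signs at the endpoints of $[0,1]$ has an odd number of zeros in $(0,1)$ counted with multiplicity, and one with matching signs an even number. Summing the contributions of the $d$ curves, the total zero count of $\det T$ in $(0,1)$ equals
\[
(n_{-+}+n_{+-}) + 2\sigma = |r| + 2s,
\]
where $\sigma \ge 0$ absorbs the excess multiplicities on each curve and $s := \sigma + \min(n_{-+},n_{+-}) \ge 0$.

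Finally, the upper bound on $s$ follows from $\det T(\theta)$ being a real polynomial in $\theta$ of degree at most $d = p+n$; it therefore has at most $p+n$ zeros in $(0,1)$ counted with multiplicity. Hence $|r| + 2s \le p+n$, and $s \le \lfloor (p+n-|r|)/2 \rfloor \le \lfloor (p+n-r)/2 \rfloor$, delivering the claimed range.

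The main obstacle will be rigorously justifying the per-curve zero counting when several eigenvalue curves coalesce or when a single curve touches zero only tangentially. Lax's avoidance-of-crossing principle, combined with Rellich's selection of an analytic basis of eigenvalues, identifies the multiplicity of each zero of $\det T = \prod_i \lambda_i$ with the sum of per-curve orders of vanishing at that point, so the accounting above remains valid in every case. Alternatively, one may bypass this subtlety by perturbing $M$ slightly to make all crossings simple and mutually distinct while preserving its inertia, and then passing to the limit.
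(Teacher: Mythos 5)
Your proposal uses the same homotopy $T(\theta)=(1-\theta)A+\theta M$ as the paper and the same underlying idea of tracking eigenvalue sign changes in $\theta\in(0,1)$ and translating them, via the bijection $\theta\mapsto\mu=\theta/(\theta-1)$, into negative real eigenvalues of $M^{-1}A$. Where you differ is in how the counting is justified. The paper invokes Lax's avoidance of crossing to assert that the eigenvalue trajectories of $T(\theta)$ never intersect, and hence that there are $|r|$ \emph{distinct} crossing parameters $\widehat\theta_j$; strictly speaking, non-crossing is a generic (codimension-two) phenomenon, not a guaranteed one for a specific one-parameter symmetric family, and several curves could in principle reach zero at the same $\theta$. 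You sidestep this by appealing to Rellich's theorem to obtain real-analytic eigenvalue branches, then classify each branch by its endpoint signs and argue via parity (odd zero count with multiplicity for branches that change sign, even for those that do not). This makes the identity $(n_{-+}+n_{+-})+2\sigma=|r|+2s$ fully rigorous, handles coalescing branches and tangential touches automatically through the determinant's multiplicities, and needs no appeal to avoidance of crossing at all. Your derivation of the upper bound on $s$ via $\deg\det T(\theta)\le p+n$ is equivalent in spirit to the paper's dimension count, but as a bonus you obtain $s\le\lfloor(p+n-|r|)/2\rfloor$, which is sharper than the paper's stated $\lfloor(p+n-r)/2\rfloor$ when $r<0$, while remaining consistent with it. In short: same route, but a cleaner and more robust execution of the crossing count.
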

\begin{proof}
  Because $T(\theta)$ is real symmetric, eigenvalue avoidance implies that the trajectories
  of the eigenvalues $\lambda(\theta)$ of $T(\theta)$ do not intersect. Thus there must be
  $|r|$ distinct values $\widehat{\theta}_j\in(0,1), j=1,2,\ldots,|r|$ for which
  $T(\widehat{\theta})$ is singular. As in the lemma above, the real value
  $$\frac{\widehat{\theta}_j}{\widehat{\theta}_j - 1} <0$$ must be an eigenvalue of $M^{-1} A$
  for $j=1,2,\ldots,|r|$.

  There may be more values of $\theta\in(0,1)$ where $T(\theta)$
  is singular; if such occurs, then the trajectory $\lambda(\theta)$ must intersect with the
  line $\lambda=0$ an even number of times. The upper bound on $s$ arises simply because there
  can be no more than $m+n$ eigenvalues of $M^{-1} A$ overall.

\end{proof}

\begin{exmpl*}
  The symmetric matrices
$$  A=\left[\begin{array}{ccccc}
      0.33 & -.05 & -.29 & 0.01 & 0.01\\
      -.05 & 0.36 & -.11 & -.22 & -.19\\
      -.29 & -.11 & -.32 & 0.11 & -.01\\
      0.01 & -.22 & 0.11 & 0.49 & -.12\\
      0.01 & -.19 & -.01 & -.12 & 0.18
    \end{array}\right], \quad
  M=\left[\begin{array}{ccccc}
      0.14 & 0.10 & 0.25 & 0.09 & -.28\\
      0.10 & -.07 & 0.02 & 0.08 & -.11\\
      0.25 & 0.02 & 0.49 & -.11 & -.23\\
      0.09 & 0.08 & -.11 & 0.24 & -.34\\
      -.28 & -.11 & -.23 & -.34 & 0.35      
    \end{array}\right]$$
have eigenvalues respectively
$$\lambda_A = -0.4553, -0.0346, 0.3949, 0.4560, 0.6791,$$
$$ \lambda_M = -0.1464, -0.1216, -0.0252, 0.5174, 0.9258,$$
hence in the notation of the above Proposition $p=3,n=2,r=-1$. The eigenvalues of $T(\theta)$ are
plotted as $\theta$ varies from $0$ to $1$; it is seen that $s=1$ in this example.
\begin{center}
  \epsfig{file=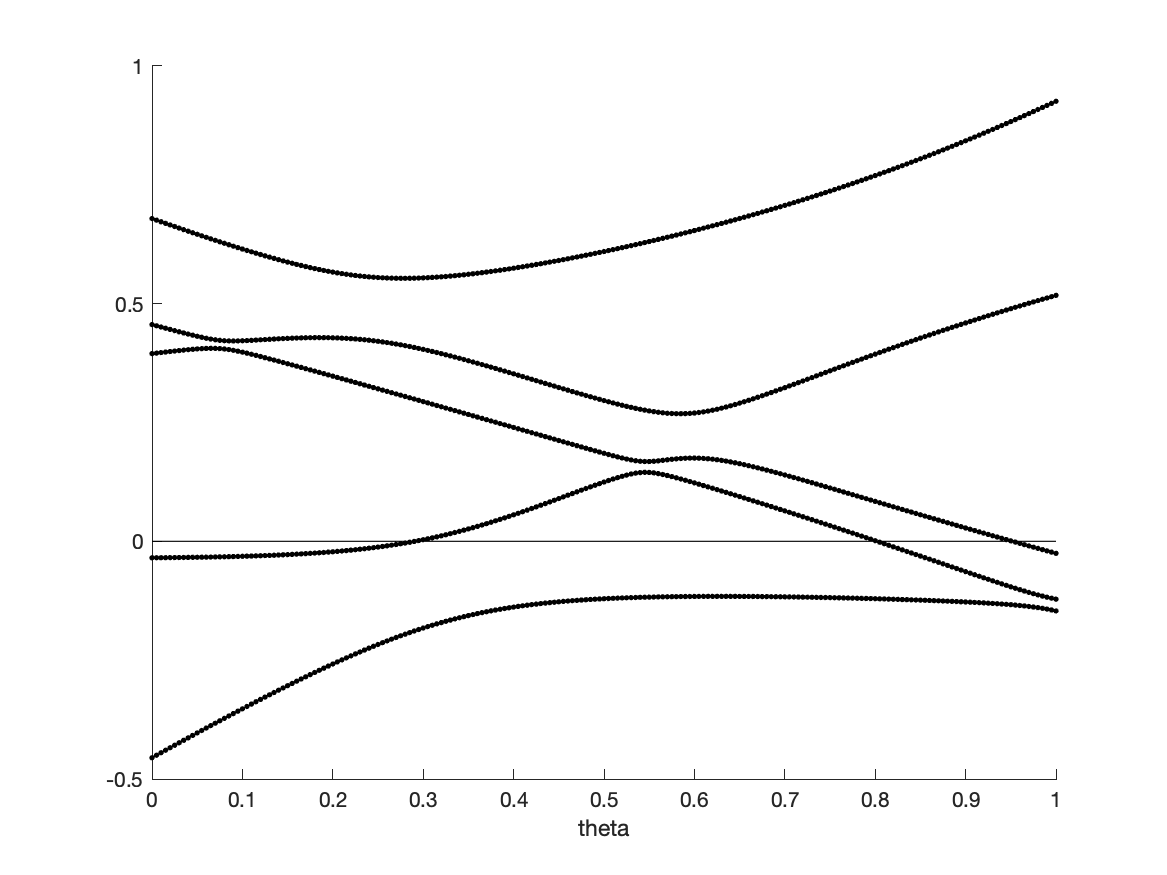, height=3.6in, width=3.6in}
\end{center}

The eigenvalues of $M^{-1}A$ are:
$$ \lambda_{M^{-1}A} = -2.4405, -0.2468, -0.0506, 1.7245 \pm 0.8315 i
$$
with the three negative real eigenvalues corresponding
respectively to the values
$$\widehat{\theta} = 0.2907, 0.8021, 0.9518 $$ as seen in the plot.
\end{exmpl*}

Rather than $T(\theta)$, by considering the alternative homotopy
$$S(\theta)=(1-\theta)A +\theta(-M)$$
one can say something about positive real eigenvalues. 

\begin{cor*}
  If $A$ has inertia $(p,0,n)$ and $M$ has inertia $(p+r,0,n-r)$ where the integer
  $r$ necessarily satisfies $-p\leq r\leq n$, then $M^{-1} A$ has $|p+r-n|+2 t$ real
  and positive eigenvalues
  for some
  $t\in\{0,1,2,\ldots, \min{\left({\lfloor{\frac{2p+r}{2}}\rfloor},{\lfloor{\frac{2n-r}{2}}\rfloor}\right)}\}$.
\end{cor*}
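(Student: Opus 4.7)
The plan is to mirror the Proposition's proof using the alternative homotopy $S(\theta)=(1-\theta)A+\theta(-M)$ flagged just above the statement. Since $S(\theta)$ is real symmetric for every $\theta$, the same avoidance-of-crossing principle governs its eigenvalue trajectories, and the only substantive change is that we now interpolate between $A$ and $-M$ rather than between $A$ and $M$.

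First I would observe that negating $M$ swaps the counts of positive and negative eigenvalues, so $-M$ has inertia $(n-r,\,0,\,p+r)$. Hence the number of positive eigenvalues of $S(\theta)$ changes from $p$ at $\theta=0$ to $n-r$ at $\theta=1$. Continuity of the eigenvalues together with the Intermediate Value Theorem forces at least $|p-(n-r)|=|p+r-n|$ distinct values $\widehat\theta_j\in(0,1)$ at which $S(\widehat\theta_j)$ is singular. The same algebraic manipulation as in the Proposition converts each such $\widehat\theta_j$ into a positive real eigenvalue $\widehat\theta_j/(1-\widehat\theta_j)>0$ of $M^{-1}A$. Any additional zero-crossings of eigenvalue trajectories must come in matched pairs (a trajectory that leaves zero and ends on the side it started must cross back), so the total count of positive real eigenvalues of $M^{-1}A$ is of the form $|p+r-n|+2t$ for some integer $t\geq 0$.

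For the upper bound I would use that $\det(S(\theta))$ is a polynomial in $\theta$ of degree at most $p+n$, so has at most $p+n$ roots in $(0,1)$. Hence $|p+r-n|+2t\leq p+n$, i.e.\ $t\leq\tfrac{1}{2}(p+n-|p+r-n|)$; a case-split on the sign of $p+r-n$ simplifies the right-hand side to $(2n-r)/2$ when $p+r-n\geq 0$ and to $(2p+r)/2$ otherwise, and in each case this is the smaller of the two quantities, giving $t\leq\min\bigl((2p+r)/2,(2n-r)/2\bigr)$. Taking floors (legitimate because $t$ is an integer) delivers the stated bound. The delicate step here is this upper-bound half: I would want to confirm that every positive real eigenvalue of $M^{-1}A$, counted with multiplicity, corresponds to a distinct root of $\det(S(\theta))$ inside the open interval $(0,1)$, so that the polynomial-degree argument really captures everything. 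The remainder essentially recycles the Proposition's proof with $M$ replaced by $-M$.
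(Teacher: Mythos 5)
Your argument is essentially the paper's: you use the homotopy $S(\theta)=(1-\theta)A+\theta(-M)$, note $-M$ has inertia $(n-r,0,p+r)$, invoke the Intermediate Value Theorem to get at least $|p+r-n|$ values of $\widehat\theta\in(0,1)$ where $S(\widehat\theta)$ is singular (each yielding a positive eigenvalue $\widehat\theta/(1-\widehat\theta)$ of $M^{-1}A$), and observe that extra crossings come in pairs. You go a little further than the paper by explicitly deriving the upper bound $t\leq\min\bigl(\lfloor(2p+r)/2\rfloor,\lfloor(2n-r)/2\rfloor\bigr)$ from the degree of $\det(S(\theta))$ and a case-split on $\operatorname{sign}(p+r-n)$, whereas the paper simply appeals to the total eigenvalue count as in the Proposition; your caveat about multiplicities is resolved by the M\"obius bijection $\lambda\mapsto\lambda/(1+\lambda)$ between $(0,\infty)$ and $(0,1)$, which preserves root multiplicities.
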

\begin{proof}
  Simply note that $S(0) = A$ has inertia $(p,0,n)$ and $S(1) = -M$ has
  inertia $(n-r,0,p+r)$ so that if $p\ne n-r$ then $S(\theta)$ must be singular for
  at least $|p+r-n|$ values $\widehat{\theta}\in(0,1)$ implying $|p+r-n|$ real
  positive eigenvalues $\frac{\widehat{\theta}}{1-\widehat{\theta}}$. Any further
  crossings of the axis must appear in pairs as in the Proposition above.
\end{proof}  

In the example above we have $p=n-r$, (ie. the inertia of $A$ = the inertia of $-M$), hence the
absence of any real positive eigenvalues; here $t=0$.

\section*{Conclusions}

\label{sec:conclusions}

The elementary lemma of this paper
\color{black}
has implications for 
preconditioned iterative solution of symmetric linear systems.
Unless the inertia of the coefficient matrix is preserved in a preconditioner/splitting matrix
then simple iteration will generally be non-contractive. Correspondingly when a Krylov
subspace iteration method is used, slower convergence might be expected if the inertia
of the preconditioner does not match the inertia of the coefficient matrix.
\color{black}

Where smoothing for multigrid is based on an indefinite matrix splitting, the smoother
can not be a contractive iteration if the inertia is not exactly preserved.

\section*{Acknowledgments}
I am grateful to Hussam Al Daas for useful conversations about this work
and for the helpful comments of an anonymous referee.

\bibliographystyle{siamplain}
\bibliography{indef_references}

\end{document}